\providecommand{\U}[1]{\protect\rule{.1in}{.1in}}
\providecommand{\U}[1]{\protect\rule{.1in}{.1in}}
\theoremstyle{plain}
\newtheorem{theorem}{Theorem}[section]
\newtheorem{proposition}[theorem]{Proposition}
\newtheorem{remark}[theorem]{Remark}
\numberwithin{equation}{section}
\begin{document}
\title{A note on lineability }
\author{G. Botelho, D. Diniz, D. Pellegrino and E. Teixeira}
\address[Geraldo Botelho]{ Faculdade de Matem\'{a}tica, Universidade Federal de
Uberl\^{a}ndia, 38.400-902 - Uberl\^{a}ndia, Brazil, e-mail: botelho@ufu.br.\\
[Diogo Diniz] UAME-UFCG, Caixa Postal 10044, Cep 58109-970, Campina Grande,
PB, Brazil \\
[Daniel Pellegrino] Departamento de Matem\'{a}tica, Universidade Federal da
Para\'{\i}ba, 58.051-900 - Jo\~{a}o Pessoa, Brazil, e-mail:
dmpellegrino@gmail.com. \noindent\\
[Eduardo Teixeira] Univerisdade Federal do Cear\'{a}, Depto de Matem\'{a}tica,
Av. Humberto Monte, s/n, Fortaleza-CE, Brazil. CEP 60.455-760. }

\begin{abstract}
In this note we answer a question concerning lineability of the set of
non-absolutely summing operators.

\end{abstract}
\maketitle

\section{\bigskip Introduction and main result}

A subset $A$ of an infinite-dimensional vector space $V$ is $\mu$-lineable if
$A\cup\{0\}$ contains an infinite-dimensional subspace of dimension $\mu$. Let
$\aleph_{0}$ be the countable cardinality and $\aleph_{1}$ be the cardinality
of $\mathbb{R}$. From now on $E$ and $F$ denote Banach spaces, the space of
absolutely $p$-summing linear operators from $E$ to $F$ will be denoted by
$\Pi_{p}(E;F),$ the space of bounded linear operators from $E$ to $F$ will be
represented by $\mathcal{L}(E;F)$ and the space of compact operators from $E$
to $F$ is represented by $\mathcal{K}(E;F).$ For details on the theory of
absolutely summing operators we refer to \cite{Diestel}.

In recent papers \cite{jmaa, seo} it was shown that under certain
circumstances $\mathcal{L}(E;F)\diagdown\Pi_{p}(E;F)$ is $\aleph_{0}%
$-lineable. In \cite{jmaa} there is a question from the anonymous referee,
asking about the possibility of proving that the set is $\mu$-lineable, for
$\mu>\aleph_{0}.$ Our next result shows that an adaptation of the proof of
\cite{jmaa} answers this question in the positive:

\begin{theorem}
\label{aaa}Let $p\geq1$ and $E$ be superreflexive. If $E$ contains a
complemented infinite-dimensional subspace with unconditional basis or $F$
contains an infinite unconditional basic sequence then $\mathcal{K}%
(E;F)\diagdown\Pi_{p}(E;F)$ (hence $\mathcal{L}(E;F)\diagdown\Pi_{p}(E;F)$) is
$\aleph_{1}$-lineable.
\end{theorem}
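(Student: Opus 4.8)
The plan is to perform the construction of \cite{jmaa} not along a countable partition of $\mathbb{N}$, but along an almost disjoint family of subsets of $\mathbb{N}$ of cardinality $\aleph_{1}$. Recall that there is a family $\{A_{i}\}_{i\in I}$ of infinite subsets of $\mathbb{N}$ with $|I|=\aleph_{1}$ and $A_{i}\cap A_{j}$ finite whenever $i\neq j$ (for instance, index the family by the branches of the infinite dyadic tree, after identifying the tree with $\mathbb{N}$). I would attach to each infinite set $A\subseteq\mathbb{N}$ a bounded operator $T_{A}\colon E\to F$ with the following three features: $(\mathrm{i})$ $T_{A}$ is compact; $(\mathrm{ii})$ there is a \emph{single} sequence $(v_{n})_{n\in\mathbb{N}}$ in $E$, not depending on $A$, which is weakly $p$-summable and satisfies $\sum_{n\in A}\|T_{A}v_{n}\|^{p}=\infty$; $(\mathrm{iii})$ $T_{A}v_{m}=0$ whenever $m\notin A$. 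Granting such a family, set $T_{i}:=T_{A_{i}}$ and $V:=\mathrm{span}\{T_{i}:i\in I\}\subseteq\mathcal{K}(E;F)$.

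For the operators $T_{A}$ I would follow \cite{jmaa}. In the first case let $(x_{n})_{n}$ be the unconditional basis of a complemented infinite-dimensional subspace $X\subseteq E$ with bounded projection onto $X$, and let $(x_{n}^{*})_{n}$ be the (uniformly bounded) biorthogonal functionals, extended to $E^{*}$; in the second case let $(f_{n})_{n}$ be the given normalized unconditional basic sequence in $F$. Since $E$ is superreflexive it has some type $r>1$, so every normalized unconditional basic sequence of $E$ is dominated by the unit vector basis of $\ell_{r}$ and hence is weakly $r'$-summable; multiplying by a suitable $(c_{n})\in\ell_{r}$ then yields, for the prescribed $p\geq1$, a weakly $p$-summable sequence $(v_{n})_{n}$ in $E$ (here one also uses that $E$, being reflexive, contains no copy of $c_{0}$, so the relevant series behave well). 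One then defines $T_{A}$ so that, through the biorthogonal functionals, it acts only on the coordinates indexed by $A$, sending them with suitable scalar weights to the prescribed directions in $F$; boundedness and compactness come from $T_{A}$ being a norm limit of finite-rank operators, and the weights are chosen against the scaling $(c_{n})$ precisely so that $\sum_{n\in A}\|T_{A}v_{n}\|^{p}$ diverges while $(v_{n})_{n}$ stays weakly $p$-summable. I expect that reproducing this step faithfully — guaranteeing boundedness of $T_{A}$ and the divergence in $(\mathrm{ii})$ simultaneously, for the whole range $p\geq1$ and without extra structure on $F$ (resp.\ on $E$) — is the only delicate point, and it is exactly what superreflexivity and the unconditionality hypothesis are there to supply. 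The disjointification that follows is routine.

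It remains to check that every nontrivial finite linear combination of the $T_{i}$ is compact and non-$p$-summing. Compactness is immediate since $\mathcal{K}(E;F)$ is a linear subspace. For the rest, take scalars $c_{1},\dots,c_{k}$, not all zero, and distinct members $A_{i_{1}},\dots,A_{i_{k}}$ of the family; reordering, assume $c_{1}\neq0$, and put $B:=A_{i_{1}}\setminus(A_{i_{2}}\cup\dots\cup A_{i_{k}})$. Since each $A_{i_{1}}\cap A_{i_{l}}$ is finite, $B$ is cofinite in $A_{i_{1}}$, hence infinite. For $n\in B$ we have $n\notin A_{i_{l}}$ for $l\geq2$, so by $(\mathrm{iii})$ the operator $S:=\sum_{l=1}^{k}c_{l}T_{i_{l}}$ satisfies $Sv_{n}=c_{1}T_{i_{1}}v_{n}$. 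The sequence $(v_{n})_{n\in B}$ is weakly $p$-summable, being a subsequence of $(v_{n})_{n\in\mathbb{N}}$, and $\sum_{n\in B}\|Sv_{n}\|^{p}=|c_{1}|^{p}\sum_{n\in B}\|T_{i_{1}}v_{n}\|^{p}=\infty$, since deleting the finitely many indices of $A_{i_{1}}\setminus B$ from the divergent series $\sum_{n\in A_{i_{1}}}\|T_{i_{1}}v_{n}\|^{p}$ leaves it divergent. Hence $S\notin\Pi_{p}(E;F)$; in particular $S\neq0$, so the $T_{i}$ are linearly independent, $\dim V=|I|=\aleph_{1}$, and $V\setminus\{0\}\subseteq\mathcal{K}(E;F)\setminus\Pi_{p}(E;F)$, which is the desired conclusion.
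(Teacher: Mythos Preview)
Your route is genuinely different from the paper's. The paper does \emph{not} pass to an uncountable family of index sets: it keeps a countable pairwise \emph{disjoint} decomposition $\mathbb{N}=\bigcup_{j}A_{j}$, obtains on each block a compact non-$p$-summing operator $\widetilde{u_{j}}$ by invoking the Davis--Johnson theorem as a black box, normalizes so that $\|\widetilde{u_{j}}\|=1$, and then defines a linear injection
\[
T\colon\ell_{1}\longrightarrow\mathcal{K}(E;F),\qquad T\bigl((a_{j})\bigr)=\sum_{j}a_{j}\widetilde{u_{j}}.
\]
Disjointness of the blocks gives $T(\ell_{1})\setminus\{0\}\subseteq\mathcal{K}(E;F)\setminus\Pi_{p}(E;F)$, and the cardinality $\aleph_{1}$ comes for free from $\dim\ell_{1}=\aleph_{1}$ (recall the paper sets $\aleph_{1}=|\mathbb{R}|$). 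No common witnessing sequence is needed; each $\widetilde{u_{j}}$ may fail $p$-summability for its own private reason.

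Your almost-disjoint scheme is sound at the combinatorial level, but the construction of the operators $T_{A}$ with a \emph{single} weakly $p$-summable witness $(v_{n})$ independent of $A$ is a real gap, not merely a delicate detail. In the first case your sketch correctly produces $(v_{n})=(c_{n}x_{n})$ in $E$, but you then have to map into an \emph{arbitrary} $F$; sending all coordinates to one direction yields a nuclear (hence $p$-summing) operator, and ``prescribed directions in $F$'' has no content without structure on $F$. In the second case the problem is sharper: only $F$ carries the unconditional basic sequence, yet your production of $(v_{n})$ still appeals to an unconditional basic sequence in $E$, which need not exist in a superreflexive space (there are uniformly convex hereditarily indecomposable spaces). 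Your almost-disjoint idea \emph{can} be salvaged, but by abandoning the common witness: use Davis--Johnson on each $E_{A}$ (resp.\ into each $F_{A}$) exactly as the paper does, and then observe that restricting a finite combination $\sum_{l}c_{l}\widetilde{u_{i_{l}}}$ to $E_{A_{i_{1}}}$ (resp.\ composing with the projection onto $F_{A_{i_{1}}}$) differs from $c_{1}u_{i_{1}}$ by a finite-rank, hence $p$-summing, perturbation. That repair works, but at that point the paper's $\ell_{1}$-injection is shorter and sidesteps the almost-disjoint machinery entirely.
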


\begin{proof}
Assume that $E$ contains a complemented infinite-dimensional subspace $E_{0}$
with unconditional basis $(e_{n})_{n=1}^{\infty}$. First consider%
\begin{equation}
\mathbb{N}=A_{1}\cup A_{2}\cup\cdots\label{bbbb}%
\end{equation}
a decomposition of $\mathbb{N}$ into infinitely many infinite pairwise
disjoint subsets $(A_{j})_{j=1}^{\infty}$. Since $\{e_{n}\,;\,n\in
\mathbb{N}\}$ is an unconditional basis, it is well known that $\{e_{n}%
\,;\,n\in A_{j}\}$ is an unconditional basic sequence for every $j\in
\mathbb{N}$. Let us denote by $E_{j}$ the closed span of $\{e_{n}\,;\,n\in
A_{j}\}.$ As a subspace of a superreflexive space, $E_{j}$ is superreflexive
as well, so from \cite[Theorem]{davis} it follows that for each $j$ there is
an operator%
\[
u_{j}\colon E_{j}\longrightarrow F
\]
belonging to ${\mathcal{K}}(E_{j};F)\diagdown\Pi_{p}(E_{j};F)$. From the proof
of \cite{jmaa} we know that each projection $P_{i}\colon E_{0}\longrightarrow
E_{i}$ is continuous and has norm $\leq\varrho$ (the constant of the
unconditional basis of $E_{0}$). This also implies that each $E_{i}$ is a
complemented subspace of $E_{0}$. If $\pi_{0}\colon E\longrightarrow E_{0}$
denotes the projection onto $E_{0}$, for each $j\in\mathbb{N}$ we can define
de operator%
\[
\widetilde{u_{j}}\colon E\longrightarrow F~,~\widetilde{u_{j}}:=u_{j}\circ
P_{j}\circ\pi_{0}.
\]
Since $(P_{j}\circ\pi_{0})(x)=x$ for every $x\in E_{j}$, it is plain that
$\widetilde{u_{j}}$ belongs to $\mathcal{K}(E;F)\diagdown\Pi_{p}(E;F)$. There
is no loss of generality in supposing $\left\Vert \widetilde{u_{j}}\right\Vert
=1$ for every $j$. Now, consider the map
\begin{align*}
T &  :\ell_{1}\rightarrow\mathcal{K}(E;F)\\
T((a_{n})_{n=1}^{\infty}) &  =%
{\displaystyle\sum\limits_{j=1}^{\infty}}
a_{j}\widetilde{u_{j}}%
\end{align*}
Since the supports of the $\widetilde{u_{n}}$ are disjoint it is clear that
$T$ is an injective linear operator, such that
\[
T(\ell_{1})\subset\left(  \mathcal{K}(E;F)\diagdown\Pi_{p}(E;F)\right)
\cup\{0\}.
\]
And therefore $\left(  \mathcal{K}(E;F)\diagdown\Pi_{p}(E;F)\right)
\cup\{0\}$ contains a vector space with the same dimension of $\ell_{1}$ (and
it is well-known that $\dim\ell_{1}=\aleph_{1}$).

Now, suppose that $F$ contains a subspace $G$ with unconditional basis
$\{e_{n};n\in\mathbb{N}\}$ with unconditional basis constant $\varrho$. Still
considering the subsets $(A_{n})$ of $\mathbb{N}$ as above, define $F_{j}$ as
the closed span of $\{e_{n};n\in A_{j}\}$ and let $P_{j}\colon
G\longrightarrow F_{j}$ be the corresponding projections. Proceeding as above
we conclude that $\Vert P_{j}\Vert\leq\varrho$. From \cite[Theorem]{davis} we
know that for each $j$ there is an operator%
\[
u_{j}\colon E\longrightarrow F_{j}%
\]
belonging to $\mathcal{K}(E;F_{j})\diagdown\Pi_{p}(E;F_{j}).$ Now by
$\widetilde{u_{j}}$ we mean the composition of $u_{j}$ with the inclusion from
$F_{j}$ to $F$. Once again consider the map
\begin{align*}
T &  :\ell_{1}\rightarrow\mathcal{K}(E;F)\\
T((a_{n})_{n=1}^{\infty}) &  =%
{\displaystyle\sum\limits_{j=1}^{\infty}}
a_{j}\widetilde{u_{j}}.
\end{align*}
Since the projections $P_{i}\colon G\longrightarrow F_{i}$ are continuous and
have norm $\leq\varrho$, it follows that
\begin{equation}
\left\Vert T\left(  (a_{n})_{n=1}^{\infty}\right)  (x)\right\Vert \geq
\rho^{-1}\left\Vert \alpha_{j}\widetilde{u_{j}}(x)\right\Vert \label{xx}%
\end{equation}
for every $j\in\mathbb{N}$. It is clear that $T$ is a linear and injective. It
also follows from (\ref{xx}) that
\[
T(\ell_{1})\subset\left(  \mathcal{K}(E;F)\diagdown\Pi_{p}(E;F)\right)
\cup\{0\}.
\]

\end{proof}

\begin{remark}
It is not difficult to show that
\[
\dim\mathcal{L}(\ell_{p};\ell_{q})=\aleph_{1}%
\]
so, for example, for $E=\ell_{p}$ $(p>1)$ and $F=\ell_{q}$ the result of the
previous theorem is optimal, i.e., we cannot improve the result to $\mu
$-lineable for $\mu>\aleph_{1}$.
\end{remark}

\section{Lineability of the set of norm attaining-operators}

Next we show that the same idea of the proof of Theorem \ref{aaa} can be
adapted to extend a result from \cite{eduardo} concerning norm-attaining operators.

In what follows $\mathcal{N\!A}^{x_{0}}(E;F)$ denotes the set of continuous
linear operators from $E$ to $F$ that attain their norms at $x_{0}$.

\begin{proposition}
\label{Lineability of NA} Let $E$ and $F$ be Banach spaces so that $E$
contains an isometric copy of $\ell_{q}$ for some $1\leq q<\infty,$ and let
$x_{0}\in\mathbb{S}_{E}.$ Then $\mathcal{N\!A}^{x_{0}}(E;F)$ is $\aleph_{1}%
$-lineable in $\mathcal{L}(E;F)$.\bigskip
\end{proposition}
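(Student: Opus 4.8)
The plan is to mimic the proof of Theorem~\ref{aaa}, replacing the Davis--Figiel--type operator with a single norm-attaining operator and exploiting the $\ell_q$-structure to produce an $\aleph_1$-dimensional family. First I would fix an isometric embedding $\iota\colon\ell_q\hookrightarrow E$ and use a change of basis in $\ell_q$ so that, after composing with a suitable isometry of $\ell_q$, the image of the canonical basis $(e_n)$ lies in a position where the designated norming point $x_0\in\mathbb S_E$ plays the role of a fixed unit vector; concretely, since any $x_0\in\mathbb S_E$ and any norm-one vector have the same role up to nothing, the real point is to build operators $T\colon E\to F$ with $\|T\|=\|Tx_0\|=1$. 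For a fixed norm-one functional $\varphi\in E^*$ with $\varphi(x_0)=1$ (Hahn--Banach) and a fixed norm-one vector $y_0\in F$, the rank-one operator $x\mapsto\varphi(x)y_0$ attains its norm at $x_0$; this already gives one element, and the issue is purely to fatten this into an $\aleph_1$-dimensional subspace \emph{all} of whose nonzero members attain their norm at $x_0$.

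The key step is the following: decompose $\mathbb N=\bigcup_{j=1}^\infty A_j$ into infinitely many infinite pairwise disjoint sets as in (\ref{bbbb}), let $\ell_q^{(j)}$ be the closed span inside the copy of $\ell_q$ of the basis vectors indexed by $A_j$ (so $\ell_q^{(j)}$ is again isometrically $\ell_q$, and the natural projection $Q_j$ onto $\ell_q^{(j)}$ has norm $1$ by unconditionality of the $\ell_q$-basis with constant $1$), and let $\pi\colon E\to\iota(\ell_q)$ be \emph{not} assumed — here I cannot assume the copy of $\ell_q$ is complemented. Instead I would work with functionals: pick norm-one $\varphi_j\in E^*$ supported on the $j$-th block, i.e.\ $\varphi_j$ attaining its norm at the ``block version'' of $x_0$; more precisely, decompose $x_0$ — if $x_0$ happens to lie in the copy of $\ell_q$ one writes $x_0=\sum a_n e_n$ and sets $x_0^{(j)}=\sum_{n\in A_j}a_n e_n$, normalized. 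The cleanest route, and the one I expect the authors take, is to first reduce to the case $E=\ell_q$ itself with $x_0=e_1$ the first basis vector (every isometric copy of $\ell_q$ with a distinguished norming point can be rotated so that $x_0$ corresponds to $e_1$, because the unit vector basis of $\ell_q$ is $1$-symmetric), and then for $E=\ell_q$ exhibit explicitly an $\aleph_1$-dimensional space of operators attaining their norm at $e_1$: e.g.\ operators of the form $T_f(x)=f\big((x_n)_n\big)\,y_0$ is too crude, so instead use that $\mathcal L(\ell_q;F)$ itself has dimension $\aleph_1$ (as in the Remark) and cut down to those $T$ with $\|T\|=\|Te_1\|$ by composing with the norm-one projection onto the first coordinate plus a controlled tail, i.e.\ $T\mapsto T\circ P_{\{1\}}+\varepsilon(\cdot)$; this needs care.

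The main obstacle is ensuring that \emph{every nonzero linear combination} still attains its norm at $x_0$: norm-attainment at a fixed point is not a linear condition, so naive superposition fails, and this is exactly why the disjoint-support trick from Theorem~\ref{aaa} is needed. I would handle it by arranging the building blocks $\widetilde u_j\colon E\to F$ to satisfy $\|\widetilde u_j\|=\|\widetilde u_j(x_0)\|=1$ with, crucially, $\widetilde u_j(x_0)$ all equal to a single fixed $y_0\in\mathbb S_F$ (or lying along a single ray), while the operators are ``disjointly supported'' on the block subspaces so that for $x$ in the relevant block $\|\widetilde u_j(x)\|$ is controlled from below and from the other blocks $\widetilde u_j$ essentially vanishes; then for $(a_n)\in\ell_1$ with $\sum|a_n|=1$ one gets $\big\|\sum_j a_j\widetilde u_j\big\|\le 1$ from the triangle inequality yet $\big\|\big(\sum_j a_j\widetilde u_j\big)(x_0)\big\|=\big\|\sum_j a_j y_0\big\|=\big|\sum_j a_j\big|$, which forces the norm to be attained at $x_0$ precisely when all $a_j$ have the same sign — so one must instead take $\widetilde u_j(x_0)=y_j$ with $(y_j)$ a $1$-unconditional (indeed isometrically $\ell_1$, if $F$ allows, or at least suitably positioned) sequence in $F$, or replace the domain block decomposition so that the supports of the $\widetilde u_j$ relative to $x_0$ overlap coherently. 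Working out this positioning — guaranteeing simultaneously $\|T(a_n)\|\le C$ and $\|T(a_n)(x_0)\|\ge C$ for a common $C$ and all $(a_n)$ — is the crux; once it is in place, injectivity of $T\colon\ell_1\to\mathcal L(E;F)$ is immediate from disjoint supports and $\dim\ell_1=\aleph_1$ finishes the argument exactly as in Theorem~\ref{aaa}.
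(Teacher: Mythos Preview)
Your proposal circles the right territory but never lands on the decisive idea, and the line you pursue most seriously --- decomposing the \emph{domain} into $\ell_q$-blocks --- cannot close. The paper does the opposite: it first reduces to $F=\ell_q$ and then decomposes the \emph{target} into disjointly supported blocks $\ell_q^{(k)}$. For each $k$ one picks any $u^{(k)}\in\mathcal{N\!A}^{x_0}(E;\ell_q^{(k)})$ (a rank-one map $\varphi(\cdot)\,e_{a_1^{(k)}}$ with $\varphi(x_0)=\|\varphi\|=1$ already does), and then for $(a_j)\in\ell_1$ and any $x\in E$ the disjointness of ranges inside $\ell_q$ gives
\[
\Bigl\|\sum_j a_j u^{(j)}(x)\Bigr\|_q^{\,q}=\sum_j |a_j|^q\|u^{(j)}(x)\|_q^{\,q}\le\sum_j |a_j|^q\|u^{(j)}\|^q=\Bigl\|\sum_j a_j u^{(j)}(x_0)\Bigr\|_q^{\,q},
\]
so every nonzero combination automatically attains its norm at $x_0$: no sign restrictions, no delicate positioning of images. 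This is precisely the computation you were reaching for when you wrote ``take $\widetilde u_j(x_0)=y_j$ with $(y_j)$ \ldots\ suitably positioned in $F$'', but you treated it as a contingency (``if $F$ allows'') rather than seeing that the reduction to $F=\ell_q$ supplies exactly that structure.

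Your domain-side attempts, by contrast, all hit the walls you yourself flag: the copy of $\ell_q$ in $E$ need not be complemented, so there are no block projections $P_j$ to compose with; a reduction to $E=\ell_q$, $x_0=e_1$ does not manufacture operators on the original $E$; and sending every $\widetilde u_j(x_0)$ to a common $y_0$ collapses under cancellation, as you correctly observed. The missing move is simply to transplant the disjoint-support trick of Theorem~\ref{aaa} from the source to the range. Once that is done, injectivity of $T\colon\ell_1\to\mathcal L(E;\ell_q)$ and $\dim\ell_1=\aleph_1$ finish the proof exactly as you anticipated.
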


\begin{proof}
The beginning of the proof follows the lines of the similar result from
\cite{eduardo}. It suffices to prove for $F=\ell_{q}$. We can write the set of
positive integers $\mathbb{N}$ as
\[
\mathbb{N=}%
{\displaystyle\bigcup\limits_{k=1}^{\infty}}
A_{k},
\]
where each
\begin{equation}
A_{k}:=\{a_{1}^{(k)}<a_{2}^{(k)}<...\}\label{hhggpp}%
\end{equation}
has the same cardinality as $\mathbb{N}$ and the sets $A_{k}$ are pairwise
disjoint. For each positive integer $k$, we define%
\[
\ell_{q}^{(k)}:=\left\{  x\in\ell_{q}:x_{j}=0\text{ if }j\notin A_{k}\right\}
.
\]
For each $k$ we can find operators $u^{(k)}$ on $\mathcal{N\!A}^{x_{0}}%
(E;\ell_{q}^{(k)})$ . By composing these operators with the inclusion of
$\ell_{q}^{(k)}$ into $\ell_{q}$ we get a vector (and we maintain the same
notation for the sake of simplicity) on $\mathcal{N\!A}^{x_{0}}(E;\ell_{q})$.
Consider the map
\begin{align*}
T &  :\ell_{1}\rightarrow\mathcal{N\!A}^{x_{0}}(E;\ell_{q})\\
T((a_{n})_{n=1}^{\infty}) &  =%
{\displaystyle\sum\limits_{j=1}^{\infty}}
a_{j}u^{(j)}.
\end{align*}
It is clear that $T$ is linear and injective. We also have that (due the
disjoint supports of the $u^{(j)}$)%
\[
T(\ell_{1})\subset\mathcal{N\!A}^{x_{0}}(E;\ell_{q}).
\]
Since $T$ is injective, it follows that $T(\ell_{1})$ is an
infinite-dimensional space and its basis has the same cardinality of the basis
of $\ell_{1}.$ Recall that $\dim(\ell_{1})=\aleph_{1}.$
\end{proof}

\bigskip

\end{document}